\newtheorem{statement}{}
\newtheorem{theorem}[statement]{Theorem}
\newtheorem{proposition}[statement]{Proposition}
\newtheorem{corollary}[statement]{Corollary}
\newtheorem{rem}[statement]{Remark}
\def\N{\Bbb N}
\def\dens{\, {\rm dens}\, }
\def\la{\langle}
\def\ra{\rangle}
\def\vv{{\cal V}}
\def\ee{\varepsilon}
\def\nn{\|\cdot\|}
\def\ll{{\mathcal L}}
\def\vv{{\mathcal V}}
\def\lll{\longmapsto}
\def\C{\mathcal{C}}
\def\ov{\overline}
\title{Projections in duals to Asplund spaces made without Simons' lemma}
\author{Marek C\'uth and Mari\'an Fabian}
\dedicatory{In honor of the 70th birthday of Charles Stegall.}
\address{Faculty of Mathematics and Physics, Sokolovsk\'a 83, 18675 Praha 8, Czech Republic}
\address{Mathematical Institute of Czech Academy of Sciences, \v Zitn\'a 25, 115 67 Praha 1, Czech Republic}
\email{cuthm5am@karlin.mff.cuni.cz}
\email{fabian@math.cas.cz}
\thanks{M. C\'uth was supported by the Grant No. 282511/B-MAT/MFF of 
Grant Agency of Charles University in Prague. M. Fabian was supported by grant P201/12/0290 and by RVO: 67985840}
\date{\today}
\subjclass[2010]{Primary 46B26, Secondary 46B20, 46B22}
\keywords{Asplund space, projectional resolution of identity, projectional skeleton, weak star dentability, Jayne-Rogers selection theorem, separable reduction}
\begin{document}\maketitle

\begin{abstract}
G. Godefroy and the second author of this note proved in 1988 
that in duals to Asplund spaces there always exists a projectional resolution of the identity. 
A few years later, Ch. Stegall succeeded to drop from the original proof a deep lemma of S. Simons. 
Here, we rewrite the condensed argument of Ch. Stegall in a more transparent and detailed way. 
We actually show that this technology of Ch. Stegall leads to a bit 
stronger/richer object ---the so called projectional skeleton--- recently constructed by W. Kubi\'s, via S. Simons' lemma and with help of elementary submodels from logic.
\end{abstract}

\maketitle


In 1988, G. Godefroy and the second named author of this note constructed in \cite{fg} a 
projectional resolution of the identity in duals to general Asplund spaces;
see, e.g., \cite[Definitions 1.0.1 and 6.1.5]{f}. A few years later, Ch. Stegall, presented a simplified variant of this construction by avoiding from the proof the use of S. Simons' lemma
 \cite[Lemma 8.1.3]{f} or of any other substitute of it. He published it, in a rather condensed form in \cite{st}. In this note, we perform his argument in full details. Thus we believe that Ch. Stegall's approach, so far overlooked by audience, will attract a broader attention. 
Ch. Stegall's technology led us even to constructing a stronger/richer object, which includes the so called 1-{\em projectional skeleton}, introduced and studied recently by W. Kubi\'s, see \cite[pages 765, 766]{kubis}, \cite[pages 369, 370]{kkl}. W. Kubi\'s' construction was based on S. Simons' lemma and
was done by using elementary submodels from logic. 

Let $(X,\|\cdot\|)$ be any Banach space.  If $V$ is a subspace of $X$ and $x^*
\in X^*$, then ${x^*}\!\!\upharpoonright_{V}$ means the restriction of $x^*$ to $V$; similarly, we put
$M\!\!\upharpoonright_{V}:= \big\{{x^*}\!\!\upharpoonright_{V}:\ x^*\in M\}$ for any set $M$ in $X^*$.
Alaoglu's theorem asserts that the closed unit ball $B_{X^*}$ in $X^*$ provided with the weak$^*$ topology is a compact space.
Let ${\C(B_{X^*})}$ denote the (Banach) space of all continuous functions on this compact space, endowed with the maximum norm $\|\cdot\|$.
Consider the multivalued mapping
\begin{eqnarray*}
{\C(B_{X^*})}\ni f\lll \big\{x^*\in B_{X^*}:\ f(x^*)=\max f(B_{X^*})\big\}=:\partial(f)\, \subset B_{X^*};
\end{eqnarray*}
thus $\partial: {\C(B_{X^*})}\longrightarrow 2^{X^*}$.
Clearly, for every $f\in {\C(B_{X^*})}$ the set $\partial(f)$ is non-empty and weak$^*$ compact. The mapping $\partial$ is also norm-to-weak$^*$ upper semicontinuous. To check this, consider any 
weak$^*$ open set $W$ in $X^*$. We
have to show that $\{f\in\C(B_{X^*}):\ \partial(f)\subset W\}$ is an open set. Assume that this is not the case. Then there exists a sequence $f_0,f_1,f_2,\ldots$ of elements in $\C(B_{X^*})$ such that $\partial(f_0)\subset W$, that $\|f_n - f_0\| \to 0$ as $n\to\infty$, and that for every $n\in\N$ there is $x^*_n\in\partial(f_n)\setminus W$.
Recalling that all the $x^*_n$'s belong to the (weak$^*$ compact) set
$B_{X^*}$, the sequence $(x^*_n)$ has a weak$^*$ cluster point,
$x^*\in B_{X^*}$, say. Also
\begin{eqnarray*}
\max f_0(B_{X^*})&\le& \max f_n(B_{X^*})+\|f_0-f_n\|\\
&=&f_n(x^*_n)-f_0(x^*_n)+\|f_0-f_n\|+f_0(x^*_n)
\le 2\|f_n-f_0\| + f_0(x^*_n)
\end{eqnarray*}
for every $n\in\N$. And, since $f_0$ is weak$^*$ continuous,  $\big(f_0(x^*)\le\big)\ \max f_0(B_{X^*})\le f_0(x^*)$. We got that $x^*\in\partial(f_0)\ (\subset W)$. However, simultaneously, $x^*\not\in W$ as $x^*_n\not\in W$ for all $n\in \N$; a contradiction.

If the Banach space $(X,\|\cdot\|)$ is separable, the mapping $\partial: {\C(B_{X^*})}\longrightarrow 2^{X^*}$ has the following (crucial) property: For every $\xi\in B_{X^*}$ there is an $f\in \C(B_{X^*})$ such that
$\partial(f)=\{\xi\}$. (This is the very point where the approach of Ch. Stegall starts.) 
Indeed, since the compact space $(B_{X^*},w^*)$ is then metrizable, 
$\xi$ is a $G_\delta$ point in it and Urysohn's lemma \cite[Theorem 1.5.10]{e} provides easily a suitable function $f$. More specifically (and without using any topological tool), such an $f$ can be constructed
by the formula 
\begin{eqnarray}\label{zero}
B_{X^*}\ni x^*\longmapsto 1-\sum_{n=1}^\infty 2^{-n}\big|\la \xi,s_n\ra-\la x^*,s_n\ra\big| =:f(x^*)
\end{eqnarray}
 where $\{s_1,s_2,\ldots\}$ is a fixed countable dense subset 
of the closed unit ball $B_X$ in $X$.
(By \cite[Lemma 2.2.1(ii)]{f}  we know that the function
$C(B_{X^*})\ni g\longmapsto \max g(B_{X^*})$ is Gateaux differentiable at
$g:=f$.)

In what follows, let $(X,\|\cdot\|)$ be a Banach space of an arbitrary density.
The observation in the latter paragraph leads to introducing the following family of functions from $\C(B_{X^*})$.
Let $S$ be any nonempty set in $B_X$. By ${\ll}(S)$  we denote the family of all 
functions of the form
\begin{eqnarray*}
B_{X^*}\ni x^*\longmapsto 1-\sum_{n=1}^k 2^{-n}\big|a_n-\la x^*,s_n\ra\big|,
\end{eqnarray*}
where $k\in\N,\ a_1,a_2,\ldots,a_k$ are rational numbers in $[-1,1]$, and $s_1,s_2,\ldots,s_k$ 
are elements from $S$; note that $\#\ll (S)=\#S+\aleph_0$. It is easy to check that each element of ${\ll}(S)$ is weak$^*$ continuous and has maximum norm at most equal to $1$. Thus ${\ll}(S)$ is a subset of (the closed unit ball of) $\,\C(B_{X^*})$. We can easily check that 
\begin{eqnarray}\label{17}
\overline{\ll(\overline{S})} = \overline{\ll  (S)}\,;
\end{eqnarray}
hence, the set in (\ref{17}) is separable whenever $S$ is separable.
Note that the $f$ defined in (\ref{zero}) belongs to the (norm) closure of
$\ll(\{s_1,s_2,\ldots\})\,$. 

From now on, assume that $(X,\|\cdot\|)$ is an Asplund space, which (equivalently) means that
the dual space $X^*$ is weak$^*$ dentable \cite[Theorem 2.32]{ph}. Then, we are ready to apply the selection theorem of Jayne and Rogers 
\cite[Theorem 8]{jr}, \cite[Theorem 8.1.2]{f} to our multivalued mapping $\partial$ 
(which is already known to be norm-to-weak$^*$ upper
semicontinuous and weak$^*$ compact valued). Thus we get a sequence $\lambda_j: {\C(B_{X^*})}\longrightarrow X^*, \ j\in\N$, of norm-to-norm
continuous mappings such that for every $f\in {\C(B_{X^*})}$ the limit
$\lim_{j\to\infty} \lambda_j(f)=:\lambda_0(f)$ exists in the norm topology
of $X^*$ and moreover $\lambda_0(f)\in\partial(f)$, that is, 
$f\big(\lambda_0(f)\big)=\max f(B_{X^*})$. 
Now, we define the multivalued mapping
$$
{\C(B_{X^*})}\ni f\longmapsto \{\lambda_1(f),\lambda_2(f),\ldots\}
=:\Lambda(f)\subset X^*;
$$
thus $\Lambda: {\C(B_{X^*})}\longrightarrow 2^{X^*}$.
The  continuity of the mappings $\lambda_j$'s and (\ref{17}) then guarantee that
\begin{eqnarray}\label{18}
 \overline{\Lambda\big(\overline{\ll(\overline{S})}\big)}
 =\overline{\Lambda\big(\ll  (S)\big)}\, .
\end{eqnarray}

\begin{proposition}\label{0}
Let $(X,\|\cdot\|)$ be any Asplund space and let $V$ be any subspace of it. 
Then $B_{V^*} \subset \overline{\Lambda\big({\ll(B_V)}\big)}\,\!\!\upharpoonright_V\,$ and 
$\dens V=\dens\ov{{\rm sp}\,\Lambda\big({\ll(B_V)}\big)}=\dens V^*$;
in particular, $B_{X^*} \subset \overline{\Lambda\big({\ll(B_X)}\big)}$.
\end{proposition}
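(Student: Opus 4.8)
The plan is to deduce the three displayed assertions in the order: (i) the cheap cardinality bounds that sandwich $\dens\ov{{\rm sp}\,\Lambda(\ll(B_V))}$, (ii) the inclusion $B_{V^*}\subset\ov{\Lambda(\ll(B_V))}\upharpoonright_V$, which closes the sandwich and forces all three densities to coincide, and (iii) the case $V=X$, which is just the inclusion read with $V=X$. Write $W:=\ov{{\rm sp}\,\Lambda(\ll(B_V))}$.

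For the upper bound, choose a norm-dense $S\subset B_V$ with $\#S\le\dens V$. By (\ref{17}), $\ov{\ll(B_V)}=\ov{\ll(\ov S)}=\ov{\ll(S)}$, so $\dens\ov{\ll(B_V)}\le\#\ll(S)=\dens V+\aleph_0$. Each $\lambda_j$ being norm-to-norm continuous, $\lambda_j(\ll(B_V))$ has density at most $\dens V$; taking the countable union and then span and closure gives $\dens W\le\dens V$ (the finite-dimensional case being trivial). On the other side, $\dens V\le\dens V^*$ holds for every Banach space (pick a dense $\{x_\alpha^*\}$ in $S_{V^*}$ and functionals $x_\alpha$ with $\la x_\alpha^*,x_\alpha\ra>\tfrac12$; their closed span must be all of $V$). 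Thus I obtain $\dens W\le\dens V\le\dens V^*$, and it remains to get $\dens V^*\le\dens W$. This is exactly what the inclusion will provide: since $\ov{\Lambda(\ll(B_V))}\subset W$, the inclusion gives $B_{V^*}\subset W\upharpoonright_V$, a linear subspace of $V^*$ containing its unit ball, hence $W\upharpoonright_V=V^*$; as the restriction $W\to V^*$ is norm-nonincreasing and onto, $\dens V^*\le\dens W$, and the whole chain collapses to $\dens V=\dens W=\dens V^*$.

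The core is therefore the inclusion, which I would first prove for \emph{separable} $V$. Fix $\xi\in B_{V^*}$ and a countable dense $\{s_1,s_2,\dots\}\subset B_V$, and, as in (\ref{zero}), put $f(x^*)=1-\sum_n 2^{-n}\big|\la\xi,s_n\ra-\la x^*,s_n\ra\big|$; by the remark following (\ref{zero}), $f\in\ov{\ll(B_V)}$. Every term is nonnegative, so $f\le1$; a Hahn--Banach extension of $\xi$ lies in $B_{X^*}$ and attains the value $1$, so $\max f(B_{X^*})=1$. Moreover $f(x^*)=1$ forces $\la x^*,s_n\ra=\la\xi,s_n\ra$ for all $n$, and density of $\{s_n\}$ then yields $x^*\upharpoonright_V=\xi$; hence $\partial(f)\upharpoonright_V=\{\xi\}$. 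In particular $\lambda_0(f)\upharpoonright_V=\xi$, while $\lambda_0(f)=\lim_j\lambda_j(f)$ in norm and, by (\ref{17}) and (\ref{18}), $\lambda_j(f)\in\ov{\Lambda(\ll(B_V))}$. Thus $\lambda_0(f)\in\ov{\Lambda(\ll(B_V))}$ with $\lambda_0(f)\upharpoonright_V=\xi$, proving the inclusion when $V$ is separable.

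The step I expect to be the genuine obstacle is the passage to arbitrary $V$. A single function of $\ov{\ll(B_V)}$ involves only countably many directions $s_n$, so for non-separable $V$ no $f$ can enforce $\partial(f)\upharpoonright_V=\{\xi\}$: a finite-support peak only pins $\xi$ down on finitely many $s$'s, and assembling such partial agreements produces merely a weak$^*$ cluster point, not a member of the \emph{norm} closure with \emph{exact} restriction. I would resolve this by separable reduction. Note first a convenient reformulation: every $f\in\ll(B_V)$ factors as $\phi\circ(\cdot\upharpoonright_V)$, where $\phi$ runs over the analogous family built over the Asplund space $V$ itself, so that $\Lambda(\ll(B_V))\upharpoonright_V$ is precisely the set associated with $V$ as a whole; this already suffices for the density equalities and reduces the inclusion to the diagonal case $V=X$. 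For that case one builds, transfinitely or through a rich (cofinal, $\lambda_j$-invariant) family of separable subspaces $V_0\subset V$ carrying $\xi\upharpoonright_{V_0}$, applies the separable construction inside $\ll(B_{V_0})\subset\ll(B_V)$, and organizes the resulting functionals into one norm-convergent approximation of an extension of $\xi$. Granting this, the sandwich of the previous paragraph closes and, reading the inclusion with $V=X$, one gets $B_{X^*}\subset\ov{\Lambda(\ll(B_X))}$.
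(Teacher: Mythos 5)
Your separable case and your density sandwich are correct and essentially coincide with the paper's own argument; the genuine gap is the non-separable case, and it is twofold. First, the ``convenient reformulation'' is unjustified. It is true that every $f\in\ll(B_V)$ factors as $f=\phi\circ r_V$, where $r_V$ denotes restriction to $V$ and $\phi$ belongs to the analogous family built over $V$, and even that $\partial(f)\!\!\upharpoonright_V=\partial_V(\phi)$ (Hahn--Banach gives $r_V(B_{X^*})=B_{V^*}$). But $\Lambda$ is not $\partial$: it is the countable set of Jayne--Rogers selections of the mapping $\partial$ defined on $\C(B_{X^*})$ with values in $X^*$, whereas the intrinsic object for $V$ would come from a separate application of Jayne--Rogers to $\partial_V$ on $\C(B_{V^*})$ with values in $V^*$. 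The Jayne--Rogers theorem produces \emph{some} sequence of continuous selections, with no naturality under restriction, so nothing relates $\lambda_j(f)\!\!\upharpoonright_V$ to the $V$-selections evaluated at $\phi$; the sets $\Lambda(\ll(B_V))\!\!\upharpoonright_V$ and $\Lambda_V(\ll_V(B_V))$ have no reason to coincide. Worse, this reduction points the wrong way for the intended use: Propositions \ref{1}--\ref{3} and Theorem \ref{main} need precisely the statement for the fixed $\Lambda$ built from $X$, applied to arbitrary subspaces $V\subset X$ (it is $\ov{{\rm sp}\,\Lambda(\ll(B_V))}$, a subspace of $X^*$, that becomes the range of the projection), so proving a ``diagonal'' statement about $V$'s own selections would not deliver the proposition as stated.

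Second, even granting a reduction, your sketch for the non-separable case does not resolve the obstacle that you yourself correctly identified. The separable case applied inside a separable $Z\subset V$ yields $x^*\in\ov{\Lambda(\ll(B_Z))}$ with $x^*\!\!\upharpoonright_Z=\xi\!\!\upharpoonright_Z$ --- agreement on $Z$ only --- while the conclusion requires $\xi$ to lie in the \emph{norm} closure of $\Lambda(\ll(B_V))$ inside $X^*$; agreement on $Z$ gives no control of $\xi-x^*$ off $Z$. The paper closes exactly this loop by building a norming condition into the inductive construction of $Z$: the countable sets $S_m\subset B_V$ are chosen so that, besides swallowing $B_X\cap Z_{m-1}$, condition (\ref{19}) holds, i.e.\ for every $x^*$ in the countable set $\Lambda\big(\ll(S_{m-1})\big)$ one has $\|\xi-x^*\|=\sup\big\{\la\xi,s\ra-\la x^*,s\ra:\ s\in S_m\big\}$. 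Since each member of $\ll(S)$ involves only finitely many elements of $S=\bigcup_m S_m$, every approximant $x^*_i\in\Lambda(\ll(S))$ lies in some $\Lambda\big(\ll(S_{m_i-1})\big)$, whence
$\|\xi-x^*_i\|=\sup\big\{\la\xi-x^*_i,s\ra:\ s\in S_{m_i}\big\}
\le\big\|(\xi-x^*_i)\!\!\upharpoonright_Z\big\|
=\big\|(x^*-x^*_i)\!\!\upharpoonright_Z\big\|<\frac1i$,
which is what turns agreement on $Z$ into norm convergence in $X^*$. Your ``rich, $\lambda_j$-invariant family of separable subspaces'' gestures toward such a closing-off argument, but without an explicit device like (\ref{19}) --- the requirement that $Z$ \emph{norms} the differences $\xi-x^*$ for all the countably many relevant $x^*$ --- the phrase ``organizes the resulting functionals into one norm-convergent approximation'' is precisely the claim that still needs a proof.
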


\begin{proof}
First, assume that $V$ is separable. Consider any $v^*\in B_{V^*}$.
Find a countable dense subset $\{s_1,s_2,\ldots\}$ of $B_V$. Define 
$f:=1-\sum_{n=1}^\infty 2^{-n}\big|\la v^*,s_n\ra -\la\cdot,s_n\ra\big|$;
thus $f\in\overline{\ll(B_V)}$. Then $f(x^*)=1$ for every $x^*\in B_{X^*}$, with
${x^*}\!\!\upharpoonright_{V}=v^*$, and $-1\le\  f(y^*)<1$ whenever
$y^*\in B_{X^*}$ and ${y^*}\!\!\upharpoonright_V\neq v^*$. 
Thus $\partial(f)\!\!\upharpoonright_V=\{v^*\}$, and, since 
$\lambda_0(f)\in\partial(f)$, we have that $\lambda_0(f)\!\!\upharpoonright_V=v^*$. 
Recalling that $\|\lambda_j(f)-\lambda_0(f)\|\longrightarrow0$ as $j\to\infty$, 
we can conclude that 
$$
\lambda_0(f)\in\overline {\Lambda(f)} 
\subset \overline{\Lambda\big(\overline{\ll(B_V)}\,\big)}
=\overline{\Lambda(\ll(B_V))}
$$ 
by (\ref{18}). Therefore, 
$v^*=\lambda_0(f)\!\!\upharpoonright_{V}\in \overline{\Lambda\big(\ll(B_V)\big)}\,\!\!\upharpoonright_V \,$.

Second, assume that $V$ is non-separable (provided that $X$ is non-separable). 
Consider any $v^*\in B_{V^*}$. Find $\xi\in B_{X^*}$ so that $\xi\!\!\upharpoonright_V=v^*$.
We shall construct countable sets $S_0\subset S_1\subset S_2\subset\cdots\subset B_V$ and separable subspaces
$Z_0\subset Z_1\subset Z_2\subset\cdots\subset V$ as follows.
Pick any countable subset $S_0$ of $B_V$ and any separable (rather infinite-dimensional) subspace $Z_0\subset V$.  Let $m\in\N$ be given and assume that we have found $S_{m-1}$ and $Z_{m-1}$.
Clearly, 
${\Lambda\big(\ll(S_{m-1}})\,\big)$ is a countable subset
of $X^*$. Find then a countable set $S_{m-1}\subset S_m\subset B_V$
such that $\overline{S_m}\supset B_X\cap{Z_{m-1}}$ and that
\begin{equation}\label{19}
 \|\xi- {x^*}\| =\sup\big\{\la \xi,s\ra - 
\la x^*,s\ra:\ s\in S_m\big\}\ \ \hbox{for every}\  
x^*\in \Lambda\big(\ll(S_{m-1})\big)\,.
\end{equation}
Put then $Z_m:=\overline{{\rm sp}\, (Z_{m-1}\cup S_m)}$.
Do so for every $m\in\N$ and put finally $S:=S_0\cup S_1\cup S_2\cup\cdots$
and $Z\!:=\overline{Z_0\cup Z_1\cup Z_2\cup\cdots}\,$. 
Clearly, $S$ is a countable set, $Z$ is a separable subspace of $V$,
and $\overline{S}=B_Z$. 
The ``separable'' case says that ${\xi}\!\!\upharpoonright_Z \in
\overline{\Lambda\big(\ll(B_{Z})\big)}\, \!\!\upharpoonright_Z \,$.
By (\ref{18}) we get that $\xi\!\!\upharpoonright_Z$ actually belongs to the set
$\overline{\Lambda(\ll  (S))}\!\!\upharpoonright_Z\,$. Pick $x^*\in \overline{\Lambda(\ll  (S))}$ so that $\xi\!\!\upharpoonright_Z=x^*{}\!\!\upharpoonright_Z$. 
For every $i\in\N$ find $x^*_i\in\Lambda(\ll  (S))$ so that
$\|x^*-x^*_i\|<\frac1i$. A moment reflection reveals that for
every $i\in\N$ there is $m_i\in\N$ so that
$x^*_i\in\Lambda(\ll  (S_{m_i-1}))$. Then, by (\ref{19}),
\begin{eqnarray*}
\|\xi-x^*_i\|&=&\sup\big\{\la\xi,s\ra - \la x^*_i,s\ra:\ s\in S_{m_i}\big\}\\
&\le&\big\|\xi\!\!\upharpoonright_Z-x^*_i\!\!\upharpoonright_Z\!\big\| = \big\|x^*\!\!\upharpoonright_Z-x^*_i\!\!\upharpoonright_Z\big\| 
\le \|x^*-x^*_i\|<\hbox{$\frac1i$}
\end{eqnarray*}
for every $i\in\N$. Therefore 
$\xi \in \overline{\Lambda(\ll  (S))} \subset\overline{\Lambda(\ll(B_V))}\, ,$
and so, $v^*=\xi\!\!\upharpoonright_V\in \overline{\Lambda\big(\ll(B_V)\big)}\,\!\!\upharpoonright_V \,$. 

Finally, consider any subspace $V$ of $X$. Choosing a dense subset $M$ of $B_V$, with $\#M=\dens V$, we have by (\ref{18})
\begin{eqnarray*}
\dens V&\le&\!\! \dens V^* \le \dens \overline{\Lambda\big(\ll(B_V)\big)}\,\!\!\upharpoonright_V 
\le\dens \overline{\Lambda\big(\ll(B_V)\big)}
=\dens \overline{{\rm sp}\Lambda\big(\ll(B_V)\big)}\\
&=&\!\! \dens\ov{\Lambda(\ll(M))}
\le \#\Lambda\big(\ll(M)\big) = \# M\ =\dens V.
\end{eqnarray*}
\end{proof}

\begin{rem}\label{7} 
{\em Proposition \ref{0} is crucial for dropping S. Simons' lemma from the original construction of a projectional resolution of the identity in duals to Asplund spaces; see \cite{fg}
modulo \cite{f1}. Indeed, in \cite{fg}, instead of the mapping
$\partial:\C(B_{X^*})\longrightarrow 2^{B_{X^*}}$, there is considered
the (so called duality) mapping $X\ni x\longmapsto \{x^*\in S_{X^*}\!: \, \langle x^*,x\rangle=\|x\|\}
=:J(x)$. This $J: X \longrightarrow 2^{S_{X^*}}$ is also 
norm-to-weak$^*$ upper semicontinuous and weak$^*$ compact valued. Hence, 
the Jayne-Rogers theorem \cite[Theorem 8.1.2]{f} yields norm-to-norm continuous mappings $D_1, D_2, \ldots$ (now)
from $X$ into $X^*$ such that for every $x\in X$ the limit $\lim_{n\to\infty} D_n(x)=:D_0(x)$
exists in the norm topology of $X^*$ and moreover $\langle D_0(x),x\rangle=\|x\|$. 
Yet, there is no obvious guarantee that
the set $D_0(X)$ is equal to, or at least norm-dense in the unit sphere $S_{X^*}$. 
Here, S. Simons' lemma enters the argument and remedies the
situation by showing that $D_0(X)$ is linearly dense in all of $X^*$.}
\end{rem} 

\begin{proposition}\label{1}
Let $(X,\|\cdot\|)$ be a non-separable Asplund space and let $Z\subset X$ be
an infinite-dimensional subspace with {\rm dens}$\, Z <\, ${\rm dens}$\, X$. Then there exists an overspace 
$Z\subset V\subset X$, with {\rm dens}$\, V=\,${\rm dens}$\, Z$, such that the
restriction mapping $\ \overline{ {\rm sp}\,\Lambda\big(\ll(B_V)\big)}\ni x^*\longmapsto {x^*}\!\!\upharpoonright_V=:R(x^*)\in V^*$ 
is a (surjective) isometry.
\end{proposition}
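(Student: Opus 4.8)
The plan is to realize $V$ as the union of an increasing $\omega$-chain of subspaces, each member of which norms the (low-density) dual fragment generated by its predecessor. Write $W:=\overline{{\rm sp}\,\Lambda(\ll(B_V))}$. Since restriction never increases norm, $R$ is automatically norm-one; the whole content of the statement is that $V$ can be arranged so that $V$ norms \emph{every} functional in $W$, and surjectivity of $R$ will then come for free from Proposition \ref{0}.

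First I would isolate the basic norming step: given any subspace $U\subset X$, put $W_U:=\overline{{\rm sp}\,\Lambda(\ll(B_U))}$, which by Proposition \ref{0} has density $\dens U$. Choosing a dense subset $D$ of the unit sphere of $W_U$ with $\#D=\dens U$ and, for each $w^*\in D$ and $k\in\N$, a vector $u_{w^*,k}\in B_X$ with $\la w^*,u_{w^*,k}\ra>1-\frac1k$, I set $U':=\overline{{\rm sp}\,\big(U\cup\{u_{w^*,k}:w^*\in D,\ k\in\N\}\big)}$. Then $\dens U'=\dens U$, and since $U'$ norms each element of the dense set $D$, a routine $\ee$-argument together with homogeneity shows that $U'$ norms all of $W_U$, i.e. $\|w^*\!\!\upharpoonright_{U'}\|=\|w^*\|$ for every $w^*\in W_U$.

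Next I would build $Z=:V_0\subset V_1\subset V_2\subset\cdots\subset X$ by iterating this step, $V_{n+1}:=V_n'$, and set $V:=\overline{\bigcup_n V_n}$. Then $\dens V=\dens Z$, because a countable increasing union of subspaces each of density $\dens Z\ (\ge\aleph_0)$ again has density $\dens Z$. The decisive structural identity is
\[
\overline{{\rm sp}\,\Lambda(\ll(B_V))}=\overline{\bigcup_n\,\overline{{\rm sp}\,\Lambda(\ll(B_{V_n}))}}\,,
\]
which I would obtain from (\ref{17}) and (\ref{18}): as $\bigcup_n B_{V_n}$ is dense in $B_V$ and each function in $\ll(\cdot)$ uses only finitely many points (hence lies in some $\ll(B_{V_n})$), one has $\overline{\ll(B_V)}=\overline{\bigcup_n\ll(B_{V_n})}$, and applying $\Lambda$ together with (\ref{18}) and the norm-continuity of the $\lambda_j$'s passes this equality up to the closed linear spans.

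Finally, the isometry. Fix $x^*\in W$ and $\ee>0$; by the identity above pick $n$ and $y^*\in W_n:=\overline{{\rm sp}\,\Lambda(\ll(B_{V_n}))}$ with $\|x^*-y^*\|<\ee$. Since $V_{n+1}$ norms $W_n$ and $V\supset V_{n+1}$, we get $\|y^*\!\!\upharpoonright_V\|\ge\|y^*\!\!\upharpoonright_{V_{n+1}}\|=\|y^*\|$, whence $\|x^*\|\le\|y^*\|+\ee=\|y^*\!\!\upharpoonright_V\|+\ee\le\|x^*\!\!\upharpoonright_V\|+2\ee$. Letting $\ee\to0$ gives $\|x^*\|\le\|R(x^*)\|$, and the reverse inequality is trivial, so $R$ is an isometry. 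Surjectivity is then immediate from Proposition \ref{0}: $B_{V^*}\subset\overline{\Lambda(\ll(B_V))}\!\!\upharpoonright_V\subset R(W)$, and since $R(W)$ is a linear subspace containing $B_{V^*}$, it equals $V^*$. I expect the main obstacle to be the structural identity above and the bookkeeping that keeps every density equal to $\dens Z$ along the chain, rather than the concluding three-$\ee$ estimate.
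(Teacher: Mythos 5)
Your proposal is correct and follows essentially the same route as the paper's proof: a countable chain built by iterating a norming step, the identification $\overline{{\rm sp}\,\Lambda\big(\ll(B_V)\big)}=\overline{\bigcup_n \overline{{\rm sp}\,\Lambda\big(\ll(B_{V_n})\big)}}$ obtained from (\ref{17}) and (\ref{18}) together with the fact that each function in $\ll(\cdot)$ uses only finitely many points, a concluding three-$\ee$ estimate for the isometry, and surjectivity from Proposition~\ref{0}. The only differences are cosmetic bookkeeping: you iterate over subspaces $V_n$ (with explicit almost-norming vectors $u_{w^*,k}$, and with densities controlled via Proposition~\ref{0}), whereas the paper iterates over sets $S_m\subset B_X$ of cardinality $\dens Z$ (with the norming condition $\|\xi\|=\sup\,\la\xi,S_m\ra$ imposed abstractly, and with the extra requirement ${\rm sp}\,S_{m-1}\cap B_X\subset\overline{S_m}$ ensuring $\overline{S}=B_V$ so that (\ref{18}) applies).
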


\begin{proof}
Put $\aleph:=$\, dens$\, Z$.
By induction, we shall construct sets $S_0\subset S_1\subset S_2\subset\cdots \subset B_X$, all of cardinality $\aleph$, as follows. Let $S_0$ be a dense subset of $B_Z$, with $\# S_0=\aleph$. Let $m\in\N$ be fixed and assume that we have already found $S_{m-1}$. We already know that $\#\ll(S_{m-1})=\aleph$.  Hence, by the norm-to-norm continuity of the mappings $\lambda_j$'s, we have that $\,\overline{{\rm sp}\, \Lambda\big(\ll(S_{m-1})\big)}$ is a subspace of $X^*$, with density $\aleph$;
let $M$ be a dense set in it, with $\#M=\aleph$. Find a set $S_{m-1}\subset S_m\subset B_X$,
with $\#S_m=\aleph$, so big that $\,$sp$\,S_{m-1}\cap B_X\subset
\overline{S_m}$ and that $\|\xi\|=\sup\,\la \xi,S_m\ra$ for every $\xi\in M$. 
Then, of course, $\|x^*\|=\sup\,\la x^*,S_m\ra$ for every 
$x^*\in{{\rm sp} \,\Lambda\big(\ll(S_{m-1})\big)}$. This finishes the induction (step). 

Having constructed the $S_m$ for every $m\in\N$, put $S:=S_1\cup S_2\cup\cdots$ and $V:=\overline{{\rm sp}\, S}$;
then $\#S=\aleph$ and $V$ is a subspace of $X$, with density $\aleph$.
We shall show that this $V$, together with the corresponding $R$, serve for the conclusion of our proposition. Consider any $x^*\in
\overline{{\rm sp}\, \Lambda\big(\ll(B_V)\big)}$ and let $\ee>0$ be arbitrary.
It is easy to verify that $\overline S=B_V$. By (\ref{18}),
$\overline{\Lambda(\ll(B_V))}=\overline{\Lambda(\ll  (S))}$,
and hence $\overline{{\rm sp}\,\Lambda(\ll(B_V))}=
\overline{{\rm sp}\,\Lambda(\ll  (S))}\,$.
Thus $x^*$ belongs to $\overline{{\rm sp}\, \Lambda\big(\ll  (S)\big)}$.
Find then $\xi\in {\rm sp}\, \Lambda\big(\ll  (S)\big)$ so that $\|x^*-\xi\|<\ee$. We remark that
$\xi$ belongs even to ${\rm sp}\, \Lambda\big(\ll  (S_{m-1})\big)$
for some big $m\in\N$. Then
\begin{eqnarray*}
\|x^*\|-\ee &<&\|\xi\| = \sup\,\la \xi,S_m\ra \le \sup\,\la \xi,B_V\ra\\
& =& \|R(\xi)\| < \|R(x^*)\|+ \ee \le \|x^*\|+\ee.
\end{eqnarray*}
Thus $\|x^*\|=\|R(x^*)\|$. We proved that $R$ is an isometry. That
$R$ is surjective follows immediately from Proposition~\ref{0}.
\end{proof}

\begin{proposition}\label{2}
Let $V$ be a subspace of an Asplund space $(X,\|\cdot\|)$ such that the
restriction mapping $\ \overline{ {\rm sp}\,\Lambda\big(\ll(B_V)\big)}\ni x^*\longmapsto {x^*}\!\!\upharpoonright_V=:R(x^*)\in V^*$ 
is a (surjective) isometry. Then the mapping $\ X^*\ni x^*\longmapsto
R^{-1}\big({x^*}\!\!\upharpoonright_V\big)=: P(x^*)$ is a linear norm-$1$ projection, $P(X^*)=\overline{{\rm sp}\,\Lambda\big(\ll(B_V)\big)}$, $\dens P(X^*)=\dens V$, and $\overline{V}^{\, w^*}=P^*(X^{**})$.
\end{proposition}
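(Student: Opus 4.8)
Throughout, write $W:=\overline{{\rm sp}\,\Lambda(\ll(B_V))}$, so that by hypothesis $R\colon W\to V^*$, $R(w)=w\!\!\upharpoonright_V$, is a surjective isometry and $R^{-1}\colon V^*\to W$ is a linear isometry onto $W$. I would first dispose of the first three assertions, which follow by directly unwinding the definition of $P$. Linearity of $P$ and the bound $\|P\|\le1$ are immediate, since $P$ is the composition of the (norm-nonincreasing) linear restriction $X^*\ni x^*\mapsto x^*\!\!\upharpoonright_V\in V^*$ with the linear isometry $R^{-1}$; indeed $\|P(x^*)\|=\|R^{-1}(x^*\!\!\upharpoonright_V)\|=\|x^*\!\!\upharpoonright_V\|\le\|x^*\|$. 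The decisive remark is that $R$ is just the restriction map on $W$, so for every $w\in W$ one has $P(w)=R^{-1}(w\!\!\upharpoonright_V)=R^{-1}(R(w))=w$; thus $P$ acts as the identity on $W$. As $P(x^*)\in W$ for all $x^*$, this yields $P^2=P$, and, $P$ being a nonzero idempotent (here $V\ne\{0\}$), we get $1\le\|P\|\le1$, i.e.\ $\|P\|=1$. The same remark gives $P(X^*)=W$: the inclusion $P(X^*)\subset W$ is clear, while $P(w)=w$ for $w\in W$ gives the reverse. Finally $\dens P(X^*)=\dens W=\dens V$ by Proposition~\ref{0}.

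The substantial part is the identity $\overline{V}^{\,w^*}=P^*(X^{**})$. I would begin by computing the kernel of $P$: since $R^{-1}$ is injective, $P(x^*)=0$ exactly when $x^*\!\!\upharpoonright_V=0$, so $\ker P=V^\perp$, where $V^\perp:=\{x^*\in X^*:\ \la x^*,v\ra=0\ \text{for all}\ v\in V\}$. Next I would use the standard description of the range of the adjoint of a bounded projection. Since $P^2=P$ implies $(P^*)^2=P^*$, the adjoint $P^*$ is again a projection, and $P^*(X^{**})=(\ker P)^\perp$, the annihilator of $\ker P$ inside $X^{**}$. One inclusion is clear, as every element of $P^*(X^{**})$ annihilates $\ker P$; for the other, given $x^{**}\in(\ker P)^\perp$, the identity $x^*-P(x^*)\in\ker P$ gives $\la x^{**},x^*\ra=\la x^{**},P(x^*)\ra=\la P^*(x^{**}),x^*\ra$ for every $x^*\in X^*$, that is $x^{**}=P^*(x^{**})\in P^*(X^{**})$. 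Hence $P^*(X^{**})=(V^\perp)^\perp$.

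It remains to identify $(V^\perp)^\perp$ with $\overline{V}^{\,w^*}$, which I would do by Hahn--Banach separation in the dual pair $\la X^{**},X^*\ra$. The inclusion $\overline{V}^{\,w^*}\subset(V^\perp)^\perp$ holds because $V\subset(V^\perp)^\perp$ by the very definition of $V^\perp$ and because any annihilator is $\sigma(X^{**},X^*)$-closed. Conversely, if $x^{**}\notin\overline{V}^{\,w^*}$, separation of $x^{**}$ from the $\sigma(X^{**},X^*)$-closed subspace $\overline{V}^{\,w^*}$ yields a $\sigma(X^{**},X^*)$-continuous functional, i.e.\ some $x^*\in X^*$, that vanishes on $\overline{V}^{\,w^*}$ (hence $x^*\in V^\perp$) while $\la x^{**},x^*\ra\ne0$; thus $x^{**}\notin(V^\perp)^\perp$. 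This proves $(V^\perp)^\perp=\overline{V}^{\,w^*}$ and, together with the previous paragraph, finishes the proof. I expect the only real care to be needed in the adjoint step---correctly threading the three identities $\ker P=V^\perp$, $P^*(X^{**})=(\ker P)^\perp$, and $(V^\perp)^\perp=\overline{V}^{\,w^*}$---whereas the first three assertions are a routine unwinding of the hypothesis on $R$.
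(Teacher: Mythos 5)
Your proof is correct and follows essentially the same route as the paper's: the first three claims by unwinding the definition of $P$, the density claim from Proposition~\ref{0}, and the identity $\overline{V}^{\,w^*}=P^*(X^{**})$ via Hahn--Banach separation in the weak$^*$ topology combined with the duality $\la P^*(x^{**}),x^*\ra=\la x^{**},P(x^*)\ra$. The only cosmetic difference is that you organize the separation argument through the standard annihilator identities $\ker P=V^\perp$, $P^*(X^{**})=(\ker P)^\perp$, and $(V^\perp)^\perp=\overline{V}^{\,w^*}$, whereas the paper runs the same computation as a direct contradiction argument.
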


\begin{proof}
The first three statements concerning $P$ immediately follow from
the definition of it. The ``density'' statement is contained in Proposition~\ref {0}. It remains to prove the last equality. That $V\subset P^*(X^{**})$ follows from the definition of $P$; hence 
$\overline{V}^{\,w^*}\subset P^*(X^{**})$. Assume there exists $x^{**}\in P^*(X^{**}) \setminus \overline{V}^{\,w^*}\!$. The Hahn-Banach separation theorem yields an $x^*\in X^*$ such that $\langle x^{**},x^*\rangle\neq0$ and ${x^*}\!\!\upharpoonright_{V}\equiv 0$. But 
$$
\langle x^{**},x^*\rangle=\langle P^*(x^{**}),x^*\rangle=\langle x^{**},P(x^*)\ra = \big\langle x^{**},R^{-1}\big({x^*}\!\!\upharpoonright_{V}\big)\big\rangle =0;
$$ a contradiction. 
\end{proof}

\begin{proposition}\label{3}
Let $V_1,\, V_2$ be two subspaces of an Asplund space $(X,\|\cdot\|)$ such that $V_1\subset V_2$ and that the
restriction mappings $\ \overline{ {\rm sp}\,\Lambda\big(\ll(B_{V_i})\big)}\ni x^*\longmapsto {x^*}\!\!\upharpoonright_{V_i}=:R_i(x^*)\in V_i^*,\ i=1,2$, 
are (surjective) isometries. Define $P_i: X^*\rightarrow X^*$ by
$P_i(x^*)={R_i}^{-1}\big({x^*}\!\!\upharpoonright_{V_i}\big),\ x^*\in X^*,\ 
i=1,2$. Then $P_1\!\circ \!P_2 = P_1\ \,(=P_2\!\circ \!P_1)$, and
$(P_2-P_1)(X^*)$ is isometrical with $(V_2/V_1)^*$.
\end{proposition}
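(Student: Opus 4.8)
The plan is to read off everything from Proposition~\ref{2} applied to each index separately, then to establish the two commutation identities by elementary manipulation of restrictions, and finally to realize $(P_2-P_1)(X^*)$ as the dual of the quotient $V_2/V_1$ by transporting it through the isometry $R_2$. Write $W_i:=\ov{{\rm sp}\,\Lambda\big(\ll(B_{V_i})\big)}$ for $i=1,2$. By Proposition~\ref{2} each $P_i$ is a linear norm-one projection of $X^*$ onto $W_i$; in particular $P_i$ restricts to the identity on $W_i$, and from its very definition $P_i(x^*)\!\!\upharpoonright_{V_i}=x^*\!\!\upharpoonright_{V_i}$ for every $x^*\in X^*$. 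The first structural observation I would record is that $V_1\subset V_2$ forces $B_{V_1}\subset B_{V_2}$, hence $\ll(B_{V_1})\subset\ll(B_{V_2})$ (the family $\ll(\cdot)$ is monotone in its generating set), and therefore $W_1\subset W_2$.

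Next I would prove the commutation relations. For $P_1\!\circ\! P_2=P_1$, fix $x^*\in X^*$: since $V_1\subset V_2$, the functionals $P_2(x^*)$ and $x^*$ agree on $V_2$ and so also on $V_1$, whence $P_1\big(P_2(x^*)\big)=R_1^{-1}\big(P_2(x^*)\!\!\upharpoonright_{V_1}\big)=R_1^{-1}\big(x^*\!\!\upharpoonright_{V_1}\big)=P_1(x^*)$. For $P_2\!\circ\! P_1=P_1$, note that $P_1(x^*)\in W_1\subset W_2$ and that $P_2$ is the identity on $W_2$, so $P_2\big(P_1(x^*)\big)=P_1(x^*)$. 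This gives the first assertion.

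Then I would set $Q:=P_2-P_1$ and identify its range. Using the two identities just proved, $Q^2=P_2^2-P_2P_1-P_1P_2+P_1^2=P_2-P_1-P_1+P_1=Q$, so $Q$ is a linear projection; moreover $P_2Q=Q$ and $P_1Q=0$, which already give $Q(X^*)\subset W_2\cap\ker P_1$, while every $w\in W_2\cap\ker P_1$ satisfies $Q(w)=P_2(w)-P_1(w)=w$ and hence lies in $Q(X^*)$. Thus
$$
Q(X^*)=W_2\cap\ker P_1=\big\{w\in W_2:\ w\!\!\upharpoonright_{V_1}=0\big\},
$$
the last equality because $P_1(w)=R_1^{-1}\big(w\!\!\upharpoonright_{V_1}\big)$ vanishes exactly when $w\!\!\upharpoonright_{V_1}=0$, the map $R_1^{-1}$ being an (injective) isometry.

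Finally I would transport this range to $(V_2/V_1)^*$. Since $R_2$ is an isometry of $W_2$ onto $V_2^*$, its restriction to the subspace $Q(X^*)$ is an isometry onto $R_2\big(Q(X^*)\big)=\{v^*\in V_2^*:\ v^*\!\!\upharpoonright_{V_1}=0\}$: the image $R_2(w)=w\!\!\upharpoonright_{V_2}$ annihilates $V_1$ precisely when $w$ does, and surjectivity of $R_2$ produces every annihilating $v^*$ as such an image. The adjoint of the quotient map $q:V_2\to V_2/V_1$ is the standard isometry of $(V_2/V_1)^*$ onto this annihilator, so composing yields the desired onto isometry $(V_2/V_1)^*\to(P_2-P_1)(X^*)$. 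The one point demanding care is the bookkeeping with the three restriction maps $R_1,R_2$ and the inclusion $W_1\subset W_2$ — in particular checking that $R_2$ sends $Q(X^*)$ onto the full annihilator and not merely into it; once the commutation identities and the fact that each $P_i$ fixes its own range are in hand, the rest is formal (the duality $(V_2/V_1)^*\cong V_1^{\perp}$ being applied to the closed subspaces $V_1\subset V_2$ supplied by the construction).
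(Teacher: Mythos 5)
Your proof is correct, and at both points where real work is needed it takes a genuinely different route from the paper's. Writing, as you do, $W_i$ for $\ov{{\rm sp}\,\Lambda\big(\ll(B_{V_i})\big)}$: for the harder identity $P_1\circ P_2=P_1$, the paper passes to the bidual --- it invokes $\ov{V_1}^{\,w^*}=P_1^*(X^{**})$ from Proposition~\ref{2}, takes a net in $V_1$ weak$^*$ convergent to $P_1^*(x^{**})$, and deduces $P_2^*\circ P_1^*=P_1^*$ --- whereas your argument stays inside $X^*$: since $R_2$ is restriction to $V_2$, the definition $P_2(x^*)=R_2^{-1}\big(x^*\!\!\upharpoonright_{V_2}\big)$ gives $P_2(x^*)\!\!\upharpoonright_{V_2}=x^*\!\!\upharpoonright_{V_2}$, hence $P_2(x^*)\!\!\upharpoonright_{V_1}=x^*\!\!\upharpoonright_{V_1}$, and applying $R_1^{-1}$ finishes in one line, with no nets and no adjoints. (Your proof of $P_2\circ P_1=P_1$, from $W_1\subset W_2$ and $P_2$ being the identity on its range, is exactly what the paper means by ``immediate from the definition''.) For the isometry statement the paper gives no argument at all, only the citation \cite[Proposition 6.1.9(iv)]{f}; you prove it in full: $(P_2-P_1)^2=P_2-P_1$, the identification $(P_2-P_1)(X^*)=W_2\cap\ker P_1=\big\{w\in W_2:\ w\!\!\upharpoonright_{V_1}=0\big\}$, the verification that $R_2$ carries this set \emph{onto} (not merely into) the annihilator of $V_1$ in $V_2^*$, and the standard isometry of $(V_2/V_1)^*$ onto that annihilator via the adjoint of the quotient map. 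What your version buys is self-containedness and, for the commutation identity, a strictly more elementary argument; what the paper's buys is brevity and reuse of the already-established statement $\ov{V_1}^{\,w^*}=P_1^*(X^{**})$. One point you should state rather than leave parenthetical: the duality $(V_2/V_1)^*\cong V_1^{\perp}$ requires $V_1$ to be closed in $V_2$, which is the standing meaning of ``subspace'' here (the members of $\vv$ in Theorem~\ref{main} are closed linear spans).
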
 

\begin{proof}
(i) From the definition of $P_i$'s we have immediately that $P_2\circ P_1= P_1$.
Now, consider any $x^*\in X^*$ and any $x^{**}\in X^{**}$. Since $\overline{V_1}^{\,w^*}=P^*_1(X^{**})$ by Proposition~\ref{2}, 
there is a net $(v_\tau)_{\tau\in T}$ in $V_1$ which weak$^*$ converges to  $P^*_1(x^{**})$.
Then, using the definition of $R_2$ and the inclusion $V_1\subset V_2$, we get
\begin{eqnarray*}
\la P^*_2\circ P^*_1(x^{**}), x^*\ra &=& \la P^*_1(x^{**}),P_2(x^*)\ra
=\lim_{\tau\in T}\,\la P_2(x^*),v_\tau\ra \\
&=& \lim_{\tau\in T}\, \la R_2{}^{-1}(x^*\!\!\upharpoonright_{V_2}),v_\tau\ra
=\lim_{\tau\in T}\,\la x^*,v_\tau\ra =\la P^*_1 (x^{**}),x^*\ra,
\end{eqnarray*}
and so $P^*_2\circ P^*_1 = P^*_1$, that is, $P_1\circ P_2=P_1 \,$.
The ``isometrical'' statement can be shown as in the proof of \cite[Proposition 6.1.9(iv)]{f}.
\end{proof}

Now we are armed to construct a projectional resolution of the identity on the dual to every Asplund space.
But we rather prefer to present a bit stronger/richer statement. 

\begin{theorem}\label{main} Let $(X,\|\cdot\|)$ be a non-separable Asplund space.
Then there exist a family $\vv$ of subspaces of $X$ and a family 
$\{Y_V\!:\ V\in\vv\}$ of subspaces of $X^*$ such that
\begin{enumerate}[\upshape (i)]
  \item $\bigcup\big\{V:\ V\in\vv \ {\it and}\ \dens V\!=\!\aleph\big\}=X$
  and $\,\bigcup\big\{Y_V\!: V\in\vv \ {\it and}\ \dens V\!=\!\aleph\big\}$ $=X^*$
  for every infinite cardinal $\aleph<\dens X$;
  \item if $V_1,V_2\in \vv$, there is $V\in\vv$ such that $V\supset V_1\cup V_2$ and 
        $\dens V=\max\{\dens V_1,\dens V_2\}$. 
  \item for every $V\in\vv$ the assignment        
        $Y_V\!\ni\! x^*\longmapsto x^* \!\!\upharpoonright _V =:R_V(x^*)\in                 V^*$ is a surjective isometry, and hence the mapping $X^*\ni x^*                \longmapsto R_V{}^{-1}(x^*\!\!\upharpoonright_V)$$=:P_V(x^*)$ is a                norm-$1$ linear projection on $X^*$,
        with range $Y_V$, and $\dens P_V(X^*)=\dens V$;
  \item $\ov{V}^{\,w^*} = {P_V}^*(X^{**})$ for every $V\in\vv$;
  
  \item $\vv$ is complete in the following sense: if $\gamma$ is a limit ordinal,
        and $\{V_\alpha:\ 1\le\alpha<\gamma\}$ is an increasing long sequence
        of elements of $\,\vv$, then $V:=\ov{\bigcup_{1\le\alpha<\gamma} 
        V_\alpha}$ belongs to $\vv$ and $Y_V=\ov{\bigcup_{1\le\alpha<\gamma}
        Y_{V_\alpha}}\,$;
  \item if $\,V,U\in\vv$ and $V\subset U$, then $Y_V\subset Y_U$,
        $P_V\circ P_U=P_V\ \,(=P_U\circ P_V)$, and  
        $(P_U-P_V)(X^*)$ is isometrical with $(U/V)^*$.                 
\end{enumerate}
\end{theorem}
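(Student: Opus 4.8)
The plan is to let $\vv$ consist of all closed infinite-dimensional subspaces $V\subset X$ (together with $X$ itself) for which the restriction map $R_V\colon Y_V\to V^*$ is a surjective isometry, where $Y_V:=\ov{{\rm sp}\,\Lambda(\ll(B_V))}$ and $R_V(x^*)=x^*\!\!\upharpoonright_V$; the associated dual spaces are precisely these $Y_V$. With this single definition most of the six items become direct translations of Propositions \ref{0}--\ref{3}, and the genuine work concentrates in the covering property (i) and the completeness property (v).

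First I would dispose of the routine items. For $V\in\vv$, item (iii) is the defining property of $\vv$ together with Proposition \ref{2} (which supplies $P_V$, its range $Y_V$, and $\dens P_V(X^*)=\dens V$), and item (iv) is the last equality of Proposition \ref{2}. For $V\subset U$ in $\vv$, the inclusion $Y_V\subset Y_U$ holds because $B_V\subset B_U$ forces $\ll(B_V)\subset\ll(B_U)$ and hence $\Lambda(\ll(B_V))\subset\Lambda(\ll(B_U))$; the identities $P_V\circ P_U=P_V=P_U\circ P_V$ and the isometry of $(P_U-P_V)(X^*)$ with $(U/V)^*$ are precisely Proposition \ref{3}. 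This settles (iii), (iv), (vi). For (ii), given $V_1,V_2\in\vv$ I would put $Z:=\ov{{\rm sp}(V_1\cup V_2)}$ and $\aleph:=\dens Z=\max\{\dens V_1,\dens V_2\}$; if $\aleph<\dens X$ then Proposition \ref{1} yields an overspace $V\supset Z$ in $\vv$ with $\dens V=\aleph$, while if $\aleph=\dens X$ I would take $V=X$, which lies in $\vv$ since Proposition \ref{0} gives $Y_X=X^*$ and then $R_X$ is the identity. The same dichotomy gives the first half of (i): any $x\in X$ lies in some subspace $Z_0$ of density $\aleph$, which Proposition \ref{1} enlarges to the required $V\in\vv$ with $x\in V$.

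The second half of (i), that the spaces $Y_V$ of density $\aleph$ cover $X^*$, is the first genuinely new point. Given $x^*\in X^*$ I would use $X^*=Y_X=\ov{{\rm sp}\,\Lambda(\ll(B_X))}$ to approximate $x^*$ in norm by finite linear combinations of vectors $\lambda_j(f)$ with $f\in\ll(B_X)$; collecting the finitely many points of $B_X$ entering each such $f$ produces a countable set $D\subset B_X$ with $x^*\in\ov{{\rm sp}\,\Lambda(\ll(D))}$. Enlarging $\ov{{\rm sp}\,D}$ to density $\aleph$ and applying Proposition \ref{1} gives $V\in\vv$ with $\dens V=\aleph$ and $D\subset B_V$, whence $x^*\in\ov{{\rm sp}\,\Lambda(\ll(D))}\subset Y_V$ by monotonicity.

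The main obstacle is the completeness (v). For a limit ordinal $\gamma$ and an increasing chain $\{V_\alpha\}_{1\le\alpha<\gamma}$ in $\vv$ with $V:=\ov{\bigcup_\alpha V_\alpha}$, I would first check $B_V=\ov{\bigcup_\alpha B_{V_\alpha}}$ (approximating $v\in B_V$ through $(1-1/k)v$ so as to stay inside the balls), then invoke (\ref{17}) together with the observation that each $f\in\ll(\bigcup_\alpha B_{V_\alpha})$ has finite support and therefore lies in a single $\ll(B_{V_\beta})$; this yields $\ov{\ll(B_V)}=\ov{\bigcup_\alpha\ll(B_{V_\alpha})}$, and applying $\Lambda$, using (\ref{18}) and passing to closed spans gives $Y_V=\ov{\bigcup_\alpha Y_{V_\alpha}}$. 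It then remains to verify $V\in\vv$, i.e.\ that $R_V$ is again a surjective isometry: surjectivity is Proposition \ref{0}, while for the isometry I would note that every $y^*\in Y_{V_\beta}$ satisfies $\|y^*\|=\|y^*\!\!\upharpoonright_{V_\beta}\|\le\|y^*\!\!\upharpoonright_V\|\le\|y^*\|$, and then pass to the norm-dense union $\bigcup_\alpha Y_{V_\alpha}$ in $Y_V$ by an $\ee$-estimate. This transfer of the isometry through the limit is the most delicate step, and it is exactly what certifies that the limit space again belongs to $\vv$.
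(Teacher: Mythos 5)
Your proposal is correct and follows essentially the same route as the paper's proof: the same choice of $Y_V=\ov{{\rm sp}\,\Lambda(\ll(B_V))}$ and of $\vv$ as the subspaces for which $R_V$ is a surjective isometry, the same reduction of (ii), (iii), (iv), (vi) to Propositions~\ref{0}--\ref{3}, and the same limit argument for (v) (identifying $Y_V$ with $\ov{\bigcup_{\alpha<\gamma} Y_{V_\alpha}}$ via the finite-support observation and (\ref{17})--(\ref{18}), then transferring the isometry from the dense union by an $\ee$-estimate). The extra details you supply for (i) and (ii) --- extracting a countable set $D\subset B_X$ to cover $X^*$, and the dichotomy $\aleph<\dens X$ versus $V=X$ --- are exactly what the paper leaves implicit when it cites Propositions~\ref{0} and \ref{1}.
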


\begin{proof}
For every subspace $V$ of $X$ we put $Y_V:=\ov{{\rm sp}\,\Lambda(\ll(B_V))}$ 
and we consider the assignment $Y_V\ni x^*\longmapsto x^*\!\!\upharpoonright _V=:R_V(x^*)\in V^*$.
Let $\vv$ consist of all subspaces $V$ of $X$ such that $R_V: Y_V\longrightarrow V^*$ is a surjective isometry.

\noindent (i) and (ii) are guaranteed by Propositions~\ref{1} and \ref{0}.

\noindent(iii) follows from the definition of $\vv$ via Propositions~\ref{0}  and \ref{2}. 

\noindent(iv) follows from Proposition~\ref{2}.

\noindent (v) Assume the premise here holds. (\ref{18}) and some elementary reasoning yields 
\begin{eqnarray*}
\Lambda(\ll(B_V)) &\subset& \ov{\Lambda(\ll(B_V))} 
=\ov{\Lambda\big(\ll(\ov{\hbox{$\bigcup_{\alpha<\gamma}$}B_{V_\alpha}}\big)\big)}
= \ov{\Lambda\big(\ll(\hbox{$\bigcup_{\alpha<\gamma}$}B_{V_\alpha}\big)\big)}\\
&=& \ov{\hbox{$\bigcup_{\alpha<\gamma}$}\Lambda\big(\ll(B_{V_\alpha})\big)}
\subset \ov{\hbox{$\bigcup_{\alpha<\gamma}$}\ov{{\rm sp}\Lambda\big(\ll(B_{V_\alpha})\big)}}
=\ov{\hbox{$\bigcup_{\alpha<\gamma}$} Y_{V_\alpha}}.
\end{eqnarray*}
Hence
\begin{eqnarray}\label{pet}
Y_V = \ov{{\rm sp}\Lambda\big(\ll(B_V)\big)} = \ov{\hbox{$\bigcup_{\alpha<\gamma}$} Y_{V_\alpha}}.
\end{eqnarray}
It remains to show that $V\in\vv$, that is, that the mapping $R_V: Y_V\longrightarrow V^*$
is a surjective isometry. By Proposition~\ref{0} and (\ref{pet}), $R_V$ is surjective.
Further, fix any $x^*\in Y_V$ and let $\ee>0$ be arbitrary.
Find $\xi\in \bigcup_{\alpha<\gamma}Y_\alpha$
such that $\|x^*-\xi\|<\ee$. Then $\xi$ belongs to $Y_\alpha$ for some $\alpha<\gamma$.
Now, as $V_\alpha\in\vv$, we have
\begin{eqnarray*}
\|x^*\|-\ee &<& \|\xi\| = 
\|R_{V_\alpha}(\xi)\| =\big\|\xi\!\!\upharpoonright_{V_\alpha}\big\|
\le \big\|\xi\!\!\upharpoonright_{V}\big\|\\ &<& \big\|x^*\!\!\upharpoonright_{V}\big\|
+\ee = \|R_V(x^*)\| + \ee \le\|x^*\| + \ee.
\end{eqnarray*}
Therefore, $\|x^*\|=\|R_V(x^*)\|$ and the mapping $R_V$ is
shown to be an isometry. Thus $V\in\vv$.

\noindent (vi) immediately follows from Proposition~\ref{3}.

\end{proof} 

\begin{corollary}\label{hlavni}
Let $(X,\|\cdot\|)$ be a non-separable Asplund space. Then $(X^*,\|\cdot\|)$ admits 

\noindent (i) {\rm \cite{fg}} a projectional resolution of the identity, and also 

\noindent (ii) {\rm \cite{kubis}} a $1$-projectional skeleton. 
\end{corollary}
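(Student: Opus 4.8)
The plan is to derive Corollary~\ref{hlavni} directly from Theorem~\ref{main}, whose conclusions already package almost everything one needs. The two target objects---a projectional resolution of the identity (PRI) and a $1$-projectional skeleton---are both built from the family of projections $\{P_V:\ V\in\vv\}$ furnished by the theorem, so the work is organizational rather than analytic: I must select, from the large family $\vv$, the right subfamily for each notion and then verify the defining axioms by citing the corresponding items of Theorem~\ref{main}.

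For part (ii), I would first recall the definition of a $1$-projectional skeleton (see \cite[pages 765, 766]{kubis}): it is an indexed family $\{P_s:\ s\in\Gamma\}$ of norm-$1$ projections on $X^*$, with separable ranges, indexed by an up-directed $\sigma$-complete partially ordered set $\Gamma$, satisfying $P_sP_t=P_tP_s=P_s$ whenever $s\le t$, the union $\bigcup_s P_s(X^*)$ norm-dense in $X^*$, and the $\sigma$-completeness compatibility $P_{\sup_n s_n}=\lim_n P_{s_n}$ (pointwise) along increasing sequences. Here the natural choice is to let $\Gamma$ be the subfamily of $\vv$ consisting of separable members, ordered by inclusion. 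Up-directedness is exactly item (ii) with $\aleph=\aleph_0$; the commuting/absorbing relations $P_VP_U=P_UP_V=P_V$ for $V\subset U$ are item (vi); the norm-density of $\bigcup Y_V$ over separable $V$ is item (i) with $\aleph=\aleph_0$; and $\sigma$-completeness comes from item (v) applied to increasing sequences (a countable limit ordinal $\gamma$), where (\ref{pet}) gives $Y_V=\ov{\bigcup_n Y_{V_n}}$ and hence the required limit behavior of the projections. Each $P_V$ has separable range since $\dens P_V(X^*)=\dens V$ by (iii). Thus (ii) is immediate once the bookkeeping is aligned with Kubi\'s' axioms.

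For part (i), I would construct the PRI by a standard transfinite argument, choosing a long increasing chain $\{V_\alpha:\ \omega\le\alpha\le\mu\}$ in $\vv$, where $\mu:=\dens X$, with $\dens V_\alpha=\#\alpha$ (or $\aleph_0$ at the start), $V_\mu=X$, and $V_\alpha=\ov{\bigcup_{\beta<\alpha}V_\beta}$ at limit stages. Such a chain exists because item (i) lets me absorb a dense set of new points at each successor step while keeping the density controlled, item (ii) lets me close up pairs, and item (v) guarantees that the limit members again lie in $\vv$ (so the construction does not leave the family). Setting $P_\alpha:=P_{V_\alpha}$, the PRI axioms---norm-one projections, $P_\alpha P_\beta=P_\beta P_\alpha=P_\alpha$ for $\beta\le\alpha$, the density estimate $\dens P_\alpha(X^*)\le\#\alpha$, the continuity $P_\alpha x^*=\lim_{\beta<\alpha}P_\beta x^*$ at limit $\alpha$, and $P_\mu=\mathrm{Id}$---follow respectively from (iii), (vi), (iii), (v), and the fact that $V_\mu=X$ forces $Y_X=X^*$ via Proposition~\ref{0}.

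The one genuinely delicate point, and where I expect the main obstacle, is the transfinite indexing for the PRI: one must run a simultaneous closing-off construction that, at each successor ordinal, enlarges $V_\alpha$ enough to capture a prescribed dense subset of $X$ (ensuring $V_\mu=X$) while simultaneously staying inside $\vv$ and preserving the density bound $\dens V_\alpha\le\#\alpha$. Membership in $\vv$ is not automatic for an arbitrary enlargement---it is precisely the isometry condition of Proposition~\ref{1}---so the successor step must invoke Proposition~\ref{1} (with $Z$ the span of the previously built space together with the newly prescribed points) to produce an overspace that both lies in $\vv$ and has the correct density. The limit step is then safe by item (v). Once this chain is in hand, all PRI axioms are read off from Theorem~\ref{main} as indicated, so the entire corollary reduces to this careful but routine transfinite recursion.
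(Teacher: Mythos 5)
Your proposal is correct and follows essentially the same route as the paper: part (ii) is obtained by taking the subfamily of separable members of $\vv$ ordered by inclusion, with items (i), (ii), (iii), (v), (vi) of Theorem~\ref{main} supplying Kubi\'s' axioms (note that item (i) actually gives the equality $\bigcup Y_V = X^*$, which is what the definition requires, not just norm-density), and part (i) comes from a transfinite chain in $\vv$ built by absorbing a dense set of points at successor steps and taking closed unions at limit steps, where item (v) keeps the limits inside $\vv$. The only inessential difference is that the paper's successor step stays entirely inside Theorem~\ref{main} --- item (i) captures the new point $x_\gamma$ in some $V\in\vv$ of the right density and item (ii) merges it with $V_{\gamma-1}$ --- so there is no need to re-invoke Proposition~\ref{1}, though your variant via Proposition~\ref{1} works equally well.
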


\begin{proof} (i) Let $\mu$ be the first ordinal with $\#\mu=\dens X$. Find
a dense subset $\{x_\alpha:\ \omega<\alpha<\mu,\,$ $\alpha$ is non-limit$\}$
in $X$. By (i) find a separable element $V_\omega\in\vv$. Let $\gamma\in
(\omega,\mu]$ be any ordinal an assume that we already found $V_\alpha\in\vv$
for every $\omega\le\alpha<\gamma$.
Assume first that $\gamma$ is non-limit. By (i) find a $V\in\vv$ such that $V\ni x_\gamma$ and $\dens V=\dens V_{\gamma-1}$. By (ii) find a 
$V_\gamma\in\vv$ such that $V_\gamma\supset V\cup V_{\gamma-1}$ and
$\dens V_\gamma=V_{\gamma-1}$. Second, assume that $\gamma$ is
a limit ordinal. Put then  $V_\gamma=\ov{\bigcup_{\omega<\alpha<\gamma} V_\alpha}$. By (v), we have that $V_\gamma\in\vv$ and $Y_{V_\gamma}=\ov{\bigcup_{\omega\le\alpha<\gamma}
        Y_{{V_\alpha}}}\,$. Now we can immediately verify that
$\big\{ P_{V_\alpha}:\ \omega\le\alpha\le\mu\big\}$ is a 
projectional resolution of the identity on $(X^*,\|\cdot\|)$; see, e.g. \cite[Definition 6.1.5]{f}. 

(ii) We note that the relation ``$\subset$'' on the family $\vv$ from Theorem~\ref{main} 
is a directed partial order, which is moreover $\sigma$-complete. Thus, the subfamily 
$\{P_V:\ V\in\vv\ {\rm and}\ \dens V=\aleph_0\}\,$ is a  1-{projectional skeleton} on $(X^*,\|\cdot\|)$ 
in the sense of the definition in \cite[pages 369, 370]{kkl}.
\end{proof}

\begin{rem} {\em

1.  A 1-projectional skeleton in the dual to an Asplund space was originally constructed by W. Kubi\'s \cite{kubis}.
He started from the existence of the so called {\em projectional generator} in the dual to an Asplund space
\cite[Proposition 8.2.1]{f} and then he proceeded using elementary submodels from logic. It should be noted that the proof of \cite[Proposition 8.2.1]{f} is based on S. Simons' lemma and on the Jayne-Rogers selection theorem \cite[Theorem 8]{jr}.

2. Let $\nn_1,\ \nn_2,\ \ldots$ be a sequence of equivalent norms on
an Asplund space $X$. Let $\vv_1,\ \vv_2,\ \ldots$ be
corresponding families found in Theorem~\ref{main} for these norms.
It is not difficult to show that the family
$\vv:=\bigcap_{i=1}^\infty\vv_i$ (is not only non-empty but that it even) satisfies all the conditions (i) -- (vi)
of Theorem ~\ref{main}; see the proof of \cite[Proposition 1.1]{bm}. Then Corollary~\ref{hlavni}
provides one projectional resolution of the identity and one $1$-projectional skeleton on $X^*$
which can be related to any of the norms $\nn_1,\ \nn_2,\ \ldots$

3. Theorem~\ref{main} can be proved also with help of tools from \cite{fg} (where S. Simons' lemma
was used). Indeed, let the mappings $D_n: X\rightarrow X^*,\ n\in\N$, be from Remark~\ref{7} (they come from the Jayne-Rogers theorem) and define $D: X\rightarrow 2^{X^*}$ by
$D(x)= \{D_1(x),D_2(x),\ldots\},\ x\in X$. 
For every subspace $W$ of $X$ put $Y_W:=\ov{{\rm sp}\,D(W)}$ 
and  consider the assignment $Y_W\ni x^*\longmapsto x^*\!\!\upharpoonright _W=:R_W(x^*)\in W^*$.
Let $\mathcal W$ consist of all subspaces $W$ of $X$ such that $R_W: Y_W\longrightarrow W^*$
is a surjective isometry. From \cite[pages 145, 146]{fg}, where S. Simons' lemma and other things were used, we know that
\begin{eqnarray}\label{fg}
\ov{{\rm sp}\, D(W)\!\!\upharpoonright_W}=W^*\quad \text{for every subspace}\quad W\subset X.
\end{eqnarray} 
(This is a weakened analogue of our
Proposition~\ref{0}). By \cite[Lemma 1]{f1}, for every $Z\subset X$, with
$\dens Z<\dens X$ there is a subspace $Z\subset W\subset X$, with
$\dens W=\dens Z$, such that $R_W$ is an 
isometry from $Y_W$ into $W^*$. (This is an analogue of our Proposition~\ref{1}.)
Using this, we easily get that 
$\ov{{\rm sp}\, D(W)\!\!\upharpoonright_W}=\ov{{\rm sp}\, D(W)}\!\!\upharpoonright_W$, and by (\ref{fg}),
$R_W: Y_W\rightarrow W^*$ is a surjective isometry. Thus $W\in\mathcal W$. This way we get the properties (i) -- (iv) and (vi) listed in Theorem~\ref{main}. 
(v) can be proved as in Theorem~\ref{main}, now from the norm-to-norm continuity of the $D_n$'s.  

4. We use an opportunity to fix some inaccuracy in the book \cite{f}.
In the proof of \cite[Proposition 8.2.1]{f}, on the page 152, the equality (2) should read as
$\overline{{\rm sp}\,\Phi(B_0)\!\!\upharpoonright_Y}=Y^*$
and the set $\Delta$ should be defined as $\overline{\Phi(B_0)\!\!\upharpoonright_Y}\cap B_{Y^*}\,$.}
\end{rem}

\bf Acknowledgment. \rm
We thank our colleague W. Kubi\'s for discussing the topic of this note.

{}

\end{document}